\crefname{hypothesis}{Hypothesis}{Hypotheses}
\newcommand{\hlift}[1]{#1^{\bar{h}}}
\newcommand{\vlift}[1]{#1^{\bar{v}}}
\newcommand{\hcomp}[1]{d\pi\left(#1\right)}
\newcommand{\vcomp}[1]{K\left(#1\right)}
\newcommand{\metric}[3][]{g_{#1}\left(#2, #3\right)}
\newcommand{\bmetric}[3][]{\bar{g}_{#1}\left(#2,#3\right)}
\newcommand{\liebracket}[2]{\left[#1,#2\right]}
\newcommand{\bliebracket}[2]{\left\llbracket #1,#2 \right\rrbracket}
\title{Non-Natural Metrics on the Tangent Bundle\thanks{}
  \funding{This work was funded by the National Science Foundation grant NSF CMMI-1728277.}}
\author{Bee Vang\thanks{Boston University, Boston, MA 
  (\email{bvang@bu.edu}
).}
\and Roberto Tron\thanks{Boston University, Boston, MA
  (\email{tron@bu.edu}).}}
\begin{document}

\maketitle

\begin{abstract}
Natural metrics provide a way to induce a metric on the tangent bundle from the metric on 
its base manifold. The most studied type is the Sasaki metric, which applies the base 
metric separately to the vertical and horizontal components. We study a more general class
of metrics which introduces interactions between the vertical and horizontal components,
with scalar weights. Additionally, we explicitly clarify how to apply our and other
induced metrics on the tangent bundle to vector fields where the vertical component is
not constant along the fibers. We give application to the Special Orthogonal Group
SO(3) as an example.
\end{abstract}

\begin{keywords}
  geometry, covariant derivative, tangent bundle, Sasaki, metric, Levi-Civita, manifold
\end{keywords}


\section{Introduction}
The study of tangent bundles and their relationship to the base manifold often
rely on the Sasaki metric. However, we may gain valuable mathematical 
and physical insights by choosing a more general metric. For mechanical systems,
tangent bundles arise naturally where the manifold is the configuration space
and the Lagrangian mechanics involve the configurations and their velocities
as state variables \cite{BulloFrancesco2005Gcom}. A fundamental process is damping, 
in which the changes on the configuration depend on changes to its velocity. 
Hence, we want to study metrics where this kind of interaction is considered.
The Sasaki metric does not consider these kind of interactions.

The contributions of this paper are the generalization of the Sasaki metric 
and the derivation of the corresponding Levi-Civita connection on the
tangent bundle. We also clarify the application of the results
to general vector fields that are not constant along fibers.

The paper is organized as follows. A brief overview of relevant differential
geometry concepts are provided in \cref{sec:background}, our main results are 
in \cref{sec:main1} and \cref{sec:main2}, examples are in \cref{sec:experiments}, 
and the conclusions follow in \cref{sec:conclusions}.

\section{Background}
\label{sec:background}
Let $M$ be a n-dimensional differentiable manifold equipped with a Riemannian metric $g$ and $TM$ the 
tangent bundle of $M$. For a point $p \in M$, let $T_{p}M$ denote the 
tangent space of $M$ at $p$. A point $\bar{P} \in TM$ is a pair in the set
$\left\{ \left(p,u\right) \mid p \in M, u \in T_{p}M\right\}$. 
Let $\pi: TM \rightarrow M$ be the projection map. 
The differential of the projection map is a smooth map denoted as $d\pi: TTM \rightarrow TM$. 
For any vector fields $X,Y \in \mathfrak{X}\left(M\right)$,
the Levi-Civita connection on $M$ is denoted by $\nabla_{X}Y$.

From Sasaki, the tangent space $T_{\bar{P}}TM$ is a direct sum decomposition 
$T_{\bar{P}}TM = \mathcal{H}_{\bar{P}} \bigoplus \mathcal{V}_{\bar{P}}$,
where $\mathcal{H}_{\bar{P}}$ is the horizontal subspace and $\mathcal{V}_{\bar{P}}$ is the
vertical subspace \cite{Sasaki1958}. To construct the subspaces, we begin by defining
the exponential map on $M$. For an open neighborhood $U$ of $p := \pi\left(\bar{P}\right)\in M$,
the exponential map $exp_{p}: T_{p}M \rightarrow M$ maps a neighborhood $U'$ of 0
in $T_{p}M$ diffeomorphicly onto $U$. Let $\tau: \pi^{-1}\left(U\right) \rightarrow T_{p}M$ be
the smooth map which parallel transports every $Y \in \pi^{-1}\left(U\right)$ from
$q = \pi\left(Y\right)$ to $p$.
For $u \in T_{p}M$, let $R_{-u}: T_{p}M \rightarrow T_{p}M$ be the translation defined 
by $R_{-u}\left(X\right) = X - u$ for $X \in T_{p}M$. Then, the connection map
$K_{\left(p,u\right)}:T_{\left(p,u\right)}TM \rightarrow T_{p}M$ corresponding to the Levi-Civita connection
is defined as
\begin{equation}
  \label{eq:connectionMap}
  K\left(\bar{X}\right)_{\left(p,u\right)}= d\left(exp_{p}\circ R_{-u}\circ \tau \right)\left(\bar{X}\right)_{p} 
\end{equation}
for all $\bar{X} \in T_{\left(p,u\right)}TM$. The vertical subspaces is then defined as
the kernel of the differential $d\pi$, while the horizontal subspace is defined as
the kernel of the connection map $K$. Throughout this paper, We will use the
$d\pi$ and $K$ mappings as projections on the horizontal and vertical subspaces.

A curve $\bar{\gamma}: I \rightarrow TM$ in the tangent bundle is said to be horizontal
if its tangent $\bar{\gamma}'(t)$ satisfies $\bar{\gamma}'(t) \in \mathcal{H}_{\bar{\gamma}(t)}$
for all $t \in I$. And similarly, a curve $\bar{\gamma}: I \rightarrow TM$ in the tangent bundle
is said to be vertical if its tangent $\bar{\gamma}'(t)$ satisfies
$\bar{\gamma}'(t) \in \mathcal{V}_{\bar{\gamma}(t)}$ for all $t \in I$.

If $X$ is a vector field on $M$, then there is a unique vector field
$\hlift{X}$ on $TM$ called the horizontal lift of $X$
and a unique vector field $\vlift{X}$ on $TM$ called
the vertical lift of $X$ such that
\begin{equation}
  \label{eq:DefLifts}
  \begin{split}
    &d\pi\left(\hlift{X}\right)_{\bar{P}} = X_{\pi\left(\bar{P}\right)}, 
    \quad K\left(\hlift{X}\right)_{\bar{P}} = 0_{\pi\left(\bar{P}\right)}\\
    &d\pi\left(\vlift{X}\right)_{\bar{P}} = 0_{\pi\left(\bar{P}\right)}, 
    \quad K\left(\vlift{X}\right)_{\bar{P}} = X_{\pi\left(\bar{P}\right)}
  \end{split}
\end{equation}
for all $\bar{P} \in TM$. A result of the tangent space decomposition is that any tangent
vector $\bar{X} \in T_{\bar{P}}TM$ can be decomposed into its horizontal and vertical
components $\bar{X} = \hlift{A} + \vlift{B}$ where
$A = d\pi\left(\bar{X}\right), B = K\left(\bar{X}\right) \in T_{\pi\left(\bar{P}\right)}M$.

It is important to note that the standard results and our results in \cref{sec:main1}
rely on vector fields that only change along horizontal curves. We will denote these
type of vector fields as \textbf{\textit{lift-decomposable vector fields}}.
\begin{definition}
  A vector field $\bar{X} \in \mathfrak{X}\left(TM\right)$ is
  \textbf{\textit{lift decomposable}} if it only changes along horizontal curves.
  Then any vector field $\bar{X}$ can be decomposed locally around $(p,u) \in TM$
  as $\bar{X}_{(p,u)} = \hlift{A}_{(p,u)} +  \vlift{B}_{(p,u)}$ for $A,B \in T_{p}M$.
\end{definition}
\begin{remark}
  Lift-decomposable vector fields are constant along the fibers in the that sense
  $d\pi\left(\bar{X}(p,u)\right)_{(p,u)} = d\pi\left(\bar{X}(p,u')\right)_{(p,u')}$
  and similarly for the connection map $K$ of $\bar{X}$ for any $p \in M, u, u' \in T_{p}M$,
  and $\bar{X} \in \mathfrak{X}\left(TM\right)$.
\end{remark}
In general, lift-decomposable vector fields may be too limiting. In \cref{sec:main2},
we show how to extend the results for lift-decomposable vector fields
to any general vector fields that may change along both horizontal
and vertical curves.

As shown in \cite{Dombrowski1962}, the Lie bracket of horizontal and vertical lifts on $TM$
are given by the following
\begin{equation}
  \label{eq:LieBracketRelations}
  \begin{split}
    \bliebracket{\vlift{X}}{\vlift{Y}}_{\left(p,u\right)} &= 0\\
    \bliebracket{\hlift{X}}{\vlift{Y}}_{\left(p,u\right)} &= \vlift{(\nabla_{X}Y)}_{p}\\
    \bliebracket{\hlift{X}}{\hlift{Y}}_{\left(p,u\right)} &= \hlift{\liebracket{X}{Y}}_{p} 
    - \vlift{(\mathfrak{R}(X,Y)u)}_{p}\\
  \end{split}
\end{equation}
for all vector fields $X,Y \in \mathfrak{X}\left(M\right)$, 
$\left(p,u\right) \in TM$, $\hlift{X}, \vlift{X}, \hlift{Y}, \vlift{Y}$ are the
respective horizontal and vertical lifts, and $\mathfrak{R}$ is the curvature tensor
on $M$. Note that the vector fields are lift decomposable.

A metric $\bar{g}$ on the tangent bundle is said to be \textit{natural} with respect to
$g$ on M if
\begin{equation}
  \begin{split}
    &\bmetric[(p,u)]{\hlift{X}}{\hlift{Y}} = \metric[p]{X}{Y}\\
    &\bmetric[(p,u)]{\hlift{X}}{\vlift{Y}} = 0
  \end{split}
\end{equation}
for all $X,Y \in \mathfrak{X}\left(M\right)$ and $(p,u) \in TM$.
The Sasaki metric, first introduced in \cite{Sasaki1958}, is a special natural
metric that has been widely used to study the relationship between the
base manifold and its tangent bundle. The Sasaki metric is given as
\begin{equation}
  \begin{split}
    &\bmetric[(p,u)]{\hlift{X}}{\hlift{Y}} = \metric[p]{X}{Y}\\
    &\bmetric[(p,u)]{\hlift{X}}{\vlift{Y}} = 0\\
    &\bmetric[(p,u)]{\vlift{X}}{\vlift{Y}} = \metric[p]{X}{Y}.
  \end{split}
\end{equation}

The Kozul formula on $(M,g)$ is given by
\begin{multline}
  \label{eq:KozulFormula}
  2\metric{\nabla_{X}Y}{Z} = X\metric{Y}{Z}
  + Y\metric{X}{Z} - Z\metric{X}{Y}\\
  + \metric{\liebracket{X}{Y}}{Z} 
  - \metric{\liebracket{X}{Z}}{Y} 
  - \metric{\liebracket{Y}{Z}}{X}
\end{multline}
for all vector fields $X, Y, Z \in \mathfrak{X}\left(M\right)$.

\section{Levi-Civita Connection for Lift-Decomposable Vector Fields }
\label{sec:main1}
In this section, we assume that all vector fields on $TM$ are lift decomposable.
We define a non-natural metric $\bar{g}_{\left(p,u\right)}$ on the tangent bundle as
\begin{equation}
  \label{eq:NonNaturalMetric}
  \begin{split}
    &\bmetric[(p,u)]{\hlift{X}}{\hlift{Y}} = m_{1}\metric[p]{X}{Y}\\
    &\bmetric[(p,u)]{\hlift{X}}{\vlift{Y}} = m_{2}\metric[p]{X}{Y}\\
    &\bmetric[(p,u)]{\vlift{X}}{\vlift{Y}} = m_{3}\metric[p]{X}{Y}
  \end{split}
\end{equation}
where $\metric[p]{X}{Y}$ is the metric on the manifold $M$ at
point $p$. $\hlift{X}, \vlift{X}, \hlift{Y}, \vlift{Y}$ are the
respective horizontal and vertical lifts of the vector fields
$X, Y \in \mathfrak{X}\left(M\right)$, $(p,u) \in TM$ and $m_{1}, m_{2}, m_{3} \in \mathbb{R}$.
The scalars $m_{1}, m_{2}, m_{3}$ must be chosen such that $m_{1}, m_{3} > 0$ and
$m_{1}m_{3}-m_{2}^{2} > 0$.
\begin{proposition}
  Given a Riemannian manifold $(M,g)$ and $m_{1},m_{2},m_{3}$ chosen such that
  $m_{1}, m_{3} > 0$ and $m_{1}m_{3}-m_{2}^{2}>0$, the metric $\bar{g}$
  defined in \cref{eq:NonNaturalMetric} is a Riemannian metric on TM.
\end{proposition}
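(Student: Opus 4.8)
The plan is to verify the three defining properties of a Riemannian metric---bilinearity, symmetry, and positive-definiteness---together with smoothness. First I would note that, by the horizontal--vertical decomposition recalled after \cref{eq:DefLifts}, every $\bar{X} \in T_{\bar{P}}TM$ is uniquely $\bar{X} = \hlift{A} + \vlift{B}$ with $A = \hcomp{\bar{X}}$ and $B = \vcomp{\bar{X}}$. Hence \cref{eq:NonNaturalMetric}, which only prescribes $\bar{g}$ on pairs of lifts, extends by bilinearity to a well-defined form on all of $T_{\bar{P}}TM$. Bilinearity and smoothness are then immediate: the coefficients $m_{1}, m_{2}, m_{3}$ are constants, $g$ is a smooth bilinear form, and the projections $d\pi$ and $K$ are smooth and linear, so $\bar{g}(\bar{X},\bar{Y})$ is a smooth bilinear expression in the horizontal and vertical components of $\bar{X}$ and $\bar{Y}$.

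Symmetry also follows with little effort: the horizontal-horizontal and vertical-vertical blocks inherit symmetry from $g$, and the cross term satisfies $m_{2}\metric[p]{X}{Y} = m_{2}\metric[p]{Y}{X}$, so declaring $\bmetric[(p,u)]{\vlift{Y}}{\hlift{X}}$ to equal $\bmetric[(p,u)]{\hlift{X}}{\vlift{Y}}$ is consistent and renders $\bar{g}$ symmetric.

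The substantive step is positive-definiteness, and this is where the conditions on $m_{1}, m_{2}, m_{3}$ enter. Expanding via bilinearity, for $\bar{X} = \hlift{A}+\vlift{B}$ I obtain
\begin{equation}
  \bmetric[(p,u)]{\bar{X}}{\bar{X}} = m_{1}\metric[p]{A}{A} + 2m_{2}\metric[p]{A}{B} + m_{3}\metric[p]{B}{B}.
\end{equation}
Writing $a = \sqrt{\metric[p]{A}{A}}$ and $b = \sqrt{\metric[p]{B}{B}}$ and invoking the Cauchy--Schwarz inequality $\lvert\metric[p]{A}{B}\rvert \le ab$ on the cross term, I would bound this below, independently of the sign of $m_{2}$, by
\begin{equation}
  \bmetric[(p,u)]{\bar{X}}{\bar{X}} \ge m_{1}a^{2} - 2\lvert m_{2}\rvert ab + m_{3}b^{2}.
\end{equation}
The right-hand side is the quadratic form in $(a,b)$ attached to the symmetric matrix with diagonal entries $m_{1}, m_{3}$ and off-diagonal entry $-\lvert m_{2}\rvert$; its leading principal minor equals $m_{1} > 0$ and its determinant equals $m_{1}m_{3} - m_{2}^{2} > 0$, so by Sylvester's criterion the form is positive definite. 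Therefore $\bmetric[(p,u)]{\bar{X}}{\bar{X}} > 0$ unless $a = b = 0$, which forces $A = B = 0$ and thus $\bar{X} = 0$.

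The calculation is short, so I do not expect a genuine obstacle; the one point needing care is the passage from the inner-product cross term $\metric[p]{A}{B}$, whose sign is uncontrolled, to the scalar bound in $a$ and $b$. This is exactly why the absolute value $\lvert m_{2}\rvert$ appears and why the determinant condition $m_{1}m_{3} - m_{2}^{2} > 0$---rather than a mere nonvanishing hypothesis---is precisely what keeps the reduced $2\times 2$ form positive definite.
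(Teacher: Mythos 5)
Your proof is correct and follows essentially the same route as the paper's: expand $\bmetric[(p,u)]{\bar{X}}{\bar{X}}$ over the horizontal--vertical decomposition, bound the cross term via Cauchy--Schwarz, and reduce positive-definiteness to that of the $2\times 2$ matrix with determinant $m_{1}m_{3}-m_{2}^{2}$. Your use of $-\lvert m_{2}\rvert$ in the off-diagonal entry is in fact a small improvement in rigor over the paper, whose written bound with $-m_{2}$ is only valid as stated when $m_{2}\ge 0$, though the conclusion is unaffected since the determinant is the same.
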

\begin{proof}
  The metric $\bar{g}$ must be an inner product on $T_{(p,u)}TM$ at each point $(p,u) \in TM$.
  The symmetry and linearity properties can be verified through simple calculations. To show
  positive definiteness, we consider a tangent vector $\bar{Z} \in T_{(p,u)}TM$ where
  $\bar{Z} = \hlift{X} + \vlift{Y}$ for $X,Y \in T_{p}M$. Then
  \begin{equation*}
    \bmetric[(p,u)]{\bar{Z}}{\bar{Z}} = m_{1}\metric[p]{X}{X} + 2m_{2}\metric[p]{X}{Y}
    + m_{3}\metric[p]{Y}{Y}.
  \end{equation*}
  The metric can be bound from below by using the Cauchy-Schwarz inequality
  such that
  \begin{equation*}
    \bmetric[(p,u)]{\bar{Z}}{\bar{Z}} \geq m_{1}\|X\|^{2} - 2m_{2}\|X\|\|Y\|
    + m_{3}\|Y\|^{2}
  \end{equation*}
  where $\|\cdot\|$ is the norm with respect to $g$. The above equation
  can be rewritten in matrix notation as
  \begin{equation*}
    \bmetric[(p,u)]{\bar{Z}}{\bar{Z}} \geq
    \begin{bmatrix} \|X\| \\ \|Y\| \end{bmatrix}^{T}
    \begin{bmatrix}
      m_{1} & -m_{2}\\
      -m_{2} & m_{3}
    \end{bmatrix}
    \begin{bmatrix} \|X\| \\ \|Y\| \end{bmatrix}.
  \end{equation*}
  From the above inequality, $\bar{g}$ must be positive definite since
  the middle matrix is positive definite by $m_{1}, m_{3} > 0$ and $m_{1}m_{3}-m_{2}^2 > 0$.
\end{proof}
The metric in \cref{eq:NonNaturalMetric} allows us to choose from a class
of metrics on the tangent bundle with different horizontal and vertical subspaces
along with their Levi-Civita connections.

Using the metric defined in \cref{eq:NonNaturalMetric}, the Kozul
formula in \cref{eq:KozulFormula}, and the relations from \cref{eq:LieBracketRelations},
we derive properties of the corresponding Levi-Civita connection
$\bar{\nabla}$ on $TM$ for horizontal and vertical lifts (the proof closely mirrors those
found in \cite{Dombrowski1962,Gudmundsson2002,Kowalski1971,Sasaki1958}).
\begin{proposition}
  \label{lem:KozulFormula}
  Given a Riemannian manifold (M, g) and its tangent bundle TM equipped
  with the metric in \cref{eq:NonNaturalMetric},
  the Levi-Civita connection $\bar{\nabla}$ on TM satisfies
  \begin{enumerate}[label=(\roman*)]
  \item
    $2\bmetric{\bar{\nabla}_{\hlift{X}}\hlift{Y}}{\hlift{Z}} =
    2m_{1}\metric{\nabla_{X}Y}{Z} 
    + 2m_{2}\metric{\mathfrak{R}(u,X)Y}{Z}$\label{it:LeviProp1}
  \item
    $2\bmetric{\bar{\nabla}_{\hlift{X}}\hlift{Y}}{\vlift{Z}} =
    2m_{2}\metric{\nabla_{X}Y}{Z} 
    - m_{3}\metric{\mathfrak{R}(X,Y)u}{Z}$\label{it:LeviProp2}
  \item
    $2\bmetric{\bar{\nabla}_{\hlift{X}}\vlift{Y}}{\hlift{Z}} =
    2m_{2}\metric{\nabla_{X}Y}{Z} 
    + m_{3}\metric{\mathfrak{R}(u,Y)X}{Z}$\label{it:LeviProp3}
  \item
    $2\bmetric{\bar{\nabla}_{\hlift{X}}\vlift{Y}}{\vlift{Z}} =
    2m_{3}\metric{\nabla_{X}Y}{Z}$\label{it:LeviProp4}
  \item
    $2\bmetric{\bar{\nabla}_{\vlift{X}}\hlift{Y}}{\hlift{Z}} =
    m_{3}\metric{\mathfrak{R}(u,X)Y}{Z}$\label{it:LeviProp5}
  \item
    $2\bmetric{\bar{\nabla}_{\vlift{X}}\hlift{Y}}{\vlift{Z}} = 0$
    \label{it:LeviProp6}
  \item
    $2\bmetric{\bar{\nabla}_{\vlift{X}}\vlift{Y}}{\hlift{Z}} = 0$
    \label{it:LeviProp7}
  \item
    $2\bmetric{\bar{\nabla}_{\vlift{X}}\vlift{Y}}{\vlift{Z}} = 0$
    \label{it:LeviProp8}
  \end{enumerate}
for all vector fields $X,Y,Z \in \mathfrak{X}\left(M\right)$.
\end{proposition}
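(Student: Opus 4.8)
The plan is to apply the Koszul formula \cref{eq:KozulFormula}, now instantiated on $(TM,\bar{g})$, to each of the eight combinations in which $\bar{X},\bar{Y},\bar{Z}$ range independently over horizontal and vertical lifts. For a generic choice this reads
\begin{multline*}
  2\bmetric{\bar{\nabla}_{\bar{X}}\bar{Y}}{\bar{Z}} = \bar{X}\bmetric{\bar{Y}}{\bar{Z}} + \bar{Y}\bmetric{\bar{X}}{\bar{Z}} - \bar{Z}\bmetric{\bar{X}}{\bar{Y}}\\
  + \bmetric{\bliebracket{\bar{X}}{\bar{Y}}}{\bar{Z}} - \bmetric{\bliebracket{\bar{X}}{\bar{Z}}}{\bar{Y}} - \bmetric{\bliebracket{\bar{Y}}{\bar{Z}}}{\bar{X}},
\end{multline*}
and the whole proof reduces to evaluating the three derivative terms and the three bracket terms in each of the eight cases.

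For the derivative terms I would use two facts. First, by \cref{eq:NonNaturalMetric} every inner product of two lifts is a constant multiple of $g(\cdot,\cdot)$ pulled back along $\pi$, hence a function that is constant along the fibers. Second, a horizontal differentiating field acts on such a pulled-back function $f\circ\pi$ as $\hlift{W}(f\circ\pi) = (Wf)\circ\pi$ because $\hcomp{\hlift{W}} = W$, whereas a vertical field annihilates it, since $\hcomp{\vlift{W}} = 0$ and the function does not vary along the fiber. Thus every derivative term whose differentiating field is vertical drops out, while each term with a horizontal differentiating field yields the corresponding $m_i\, W g(\cdot,\cdot)$. For the bracket terms I would substitute the relations \cref{eq:LieBracketRelations} and then pair the resulting lifts against the third argument using \cref{eq:NonNaturalMetric}; the relation $\bliebracket{\vlift{X}}{\vlift{Y}} = 0$ kills several contributions outright, which is precisely what forces \ref{it:LeviProp7} and \ref{it:LeviProp8} to vanish.

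The recombination is then bookkeeping, taking one of three forms. In \ref{it:LeviProp1} all three derivative terms survive and, together with the non-curvature parts of the three bracket terms, reassemble exactly into the Koszul formula \cref{eq:KozulFormula} on $(M,g)$, producing $2m_1\, g(\nabla_X Y, Z)$. In \ref{it:LeviProp2}, \ref{it:LeviProp3} and \ref{it:LeviProp4} one derivative term has dropped out, and the remaining pieces collapse to $2m_i\, g(\nabla_X Y, Z)$ after a single use of metric compatibility $X g(Y,Z) = g(\nabla_X Y, Z) + g(Y,\nabla_X Z)$ together with the torsion-free identity $\nabla_X Y - \nabla_Y X = \liebracket{X}{Y}$. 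In \ref{it:LeviProp6}, \ref{it:LeviProp7} and \ref{it:LeviProp8} these same two identities, or simply $\bliebracket{\vlift{X}}{\vlift{Y}} = 0$, force the entire expression to vanish.

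The main obstacle I expect is the curvature bookkeeping. In \ref{it:LeviProp3} and \ref{it:LeviProp5} a single surviving curvature term of the form $g(\mathfrak{R}(X,Z)u, Y)$ must be rewritten into the stated form using the pair (block) symmetry $g(\mathfrak{R}(A,B)C,D) = g(\mathfrak{R}(C,D)A,B)$; in \ref{it:LeviProp2} the curvature term already appears in the required form and needs no manipulation. Item \ref{it:LeviProp1} is the delicate one: the bracket of two horizontal lifts contributes three separate curvature terms, $g(\mathfrak{R}(X,Y)u,Z)$, $g(\mathfrak{R}(X,Z)u,Y)$ and $g(\mathfrak{R}(Y,Z)u,X)$, with mixed signs, and collapsing them into the single expression $2m_2\, g(\mathfrak{R}(u,X)Y, Z)$ requires the full set of curvature symmetries — antisymmetry in each pair, the pair symmetry, and the first Bianchi identity — together with careful sign and coefficient tracking.
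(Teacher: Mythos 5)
Your proposal is correct and follows essentially the same route as the paper's proof: apply the Koszul formula on $(TM,\bar{g})$ case by case, kill the derivative terms with vertical differentiating fields because the lifted metric coefficients are constant along fibers, substitute the bracket relations \cref{eq:LieBracketRelations}, reassemble the Koszul formula on $(M,g)$ in \ref{it:LeviProp1} and use metric compatibility plus torsion-freeness in the mixed cases. Your curvature bookkeeping is also the paper's (and is verifiably right): the three mixed-sign terms in \ref{it:LeviProp1} collapse to $2m_{2}\metric{\mathfrak{R}(u,X)Y}{Z}$ via pair symmetry, antisymmetry, and the first Bianchi identity, while \ref{it:LeviProp3} and \ref{it:LeviProp5} need only a single pair-symmetry rewrite.
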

\begin{proof}
  The Kozul formula on the tangent bundle is used repeatedly to find the properties of
  the Levi-Civita connection.
  \begin{itemize}
    \item[\ref{it:LeviProp1}]
      The statement follows from the Kozul formula in the first equation.
      Then substituting properties from \cref{eq:LieBracketRelations} and
      \cref{eq:NonNaturalMetric}, we obtained the second equation.
      The third equation follows from the fact that six of the terms
      produce the Kozul formula on $M$. Lastly, we obtained the fourth
      equation by combining the Riemannian curvature tensor dependent
      terms such that $Z$ is isolated using the properties of the curvature tensor.
      \begin{multline*}
        2\bmetric{\bar{\nabla}_{\hlift{X}}\hlift{Y}}{\hlift{Z}} =
        \hlift{X}\bmetric{\hlift{Y}}{\hlift{Z}} +
        \hlift{Y}\bmetric{\hlift{Z}}{\hlift{X}}
        - \hlift{Z}\bmetric{\hlift{X}}{\hlift{Y}}\\
        - \bmetric{\hlift{X}}{\bliebracket{\hlift{Y}}{\hlift{Z}}}
        + \bmetric{\hlift{Y}}{\bliebracket{\hlift{Z}}{\hlift{X}}} 
        + \bmetric{\hlift{Z}}{\bliebracket{\hlift{X}}{\hlift{Y}}}\\
        = m_{1}X\metric{Y}{Z} + m_{1}Y\metric{Z}{X}
        - m_{1}Z\metric{X}{Y} -m_{1}\metric{X}{\liebracket{Y}{Z}}\\
        + m_{2}\metric{X}{\mathfrak{R}(Y,Z)u} + m_{1}\metric{Y}{\liebracket{Z}{X}}
        - m_{2}\metric{Y}{\mathfrak{R}(Z,X)u}\\ + m_{1}\metric{Z}{\liebracket{X}{Y}}
        - m_{2}\metric{Z}{\mathfrak{R}(X,Y)u)}\\      
        = 2m_{1}\metric{\nabla_{X}Y}{Z} + m_{2}\metric{X}{\mathfrak{R}(Y,Z)u}
        - m_{2}\metric{Y}{\mathfrak{R}(Z,X)u}\\ -m_{2}\metric{Z}{\mathfrak{R}(X,Y)u}\\
        = 2m_{1}\metric{\nabla_{X}Y}{Z} + 2m_{2}\metric{\mathfrak{R}(u,X)Y}{Z}\\
      \end{multline*}
    \item[\ref{it:LeviProp2}]
      The statement is obtained in a similar fashion to \ref{it:LeviProp1}.
      The first equation is the Kozul formula. The second equation is
      obtained by substituting properties from \cref{eq:LieBracketRelations}
      and \cref{eq:NonNaturalMetric} followed by the expansion of the derivative
      of the metric terms using the metric compatibility. Note
      that by \cref{eq:NonNaturalMetric}, we can choose $g(X,Y)$ to be
      purely horizontal or vertical. Thus, $\vlift{Z}\metric{X}{Y} = 0$.
      Finally, the last equation is obtained by expanding the Lie Bracket
      and combining terms.
      \begin{multline*}
        2\bmetric{\bar{\nabla}_{\hlift{X}}\hlift{Y}}{\vlift{Z}} =
        \hlift{X}\bmetric{\hlift{Y}}{\vlift{Z}} +
        \hlift{Y}\bmetric{\vlift{Z}}{\hlift{X}}
        - \vlift{Z}\bmetric{\hlift{X}}{\hlift{Y}}\\
        - \bmetric{\hlift{X}}{\bliebracket{\hlift{Y}}{\vlift{Z}}}
        + \bmetric{\hlift{Y}}{\bliebracket{\vlift{Z}}{\hlift{X}}}
        + \bmetric{\vlift{Z}}{\bliebracket{\hlift{X}}{\hlift{Y}}}\\
        = m_{2}\metric{Z}{\nabla_{Y}X} + m_{2}\metric{X}{\nabla_{Y}Z}
        + m_{2}\metric{Y}{\nabla_{X}Z}
        + m_{2}\metric{Z}{\nabla_{X}Y}\\ - m_{2}\metric{X}{\nabla_{Y}Z}
        + m_{2}\metric{Y}{-\nabla_{X}Z} + m_{2}\metric{Z}{\liebracket{X}{Y}}
        - m_{3}\metric{Z}{\mathfrak{R}(X,Y)u}\\
        = 2m_{2}\metric{\nabla_{X}Y}{Z}-m_{3}\metric{\mathfrak{R}(X,Y)u}{Z}\\
      \end{multline*}
      \ref{it:LeviProp3}-\ref{it:LeviProp7} are analogous to \ref{it:LeviProp2}.
    \item[\ref{it:LeviProp8}]
      The statement follows from the result that the Lie bracket of 
      two vertical vector fields vanish and that $\metric{\cdot}{\cdot}$
      can be chosen to be purely horizontal or vertical.
      \begin{multline*}
        2\bmetric{\bar{\nabla}_{\vlift{X}}\vlift{Y}}{\vlift{Z}} =
        \vlift{X}\bmetric{\vlift{Y}}{\vlift{Z}} +
        \vlift{Y}\bmetric{\vlift{Z}}{\vlift{X}}
        - \vlift{Z}\bmetric{\vlift{X}}{\vlift{Y}}\\
        - \bmetric{\vlift{X}}{\bliebracket{\vlift{Y}}{\vlift{Z}}}
        + \bmetric{\vlift{Y}}{\bliebracket{\vlift{Z}}{\vlift{X}}}
        + \bmetric{\vlift{Z}}{\bliebracket{\vlift{X}}{\vlift{Y}}}\\
        = m_{3}\vlift{X}\metric{Y}{Z} + m_{3}\vlift{Y}\metric{Z}{X}
        - m_{3}\vlift{Z}\metric{X}{Y}\\
        - \bmetric{\vlift{X}}{0} + \bmetric{\vlift{Y}}{0}
        + \bmetric{\vlift{Z}}{0}\\
        = 0\\
      \end{multline*}      
  \end{itemize}
\end{proof}

Next, we extract the explicit form of the horizontal and vertical components of
the Levi-Civita connection on the tangent bundle from \cref{lem:KozulFormula}.
To do so, we first present a useful lemma.
\begin{lemma}
\label{lem:idMetricTM}
Let $\bar{f}$ be a function $\bar{f}: T_{\left(p,u\right)}TM \rightarrow T_{\left(p,u\right)}TM$ such that
\begin{equation}
  \begin{split}
    &\hcomp{\bar{f}\circ\bar{X}} = \frac{m_{3}\hcomp{\bar{X}}-m_{2}\vcomp{\bar{X}}}
    {m_{1}m_{3}-m_{2}^{2}}\\
    &\vcomp{\bar{f}\circ\bar{X}} = \frac{-m_{2}\hcomp{\bar{X}}+m_{1}\vcomp{\bar{X}}}
    {m_{1}m_{3}-m_{2}^{2}}\\
  \end{split}
\end{equation}
for all vector fields $\bar{X} \in \mathfrak{X}\left(TM\right)$, 
$\left(p,u\right) \in TM$, and $m_{1},m_{2},m_{3} \in \mathbb{R}$ such that
$m_{1}, m_{3} > 0$ and $m_{1}m_{3}-m_{2}^{2}>0$. Then
\begin{equation}
  \begin{split}
    \bmetric[\left(p,u\right)]{\bar{f}\circ\bar{X}}{\bar{Y}} ={ }&
    \bmetric[\left(p,u\right)]{\bar{X}}{\bar{f}\circ\bar{Y}}\\
    ={ }&\metric[p]{\hcomp{\bar{X}}}{\hcomp{\bar{Y}}}
    + \metric[p]{\vcomp{\bar{X}}}{\vcomp{\bar{Y}}}.
  \end{split}
\end{equation}
\end{lemma}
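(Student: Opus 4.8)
The plan is to reduce the whole statement to the horizontal/vertical decomposition and then carry out a short bilinear computation. First I would write any two tangent vectors at $(p,u)$ in the form $\bar{X} = \hlift{A} + \vlift{B}$ and $\bar{Y} = \hlift{C} + \vlift{D}$, where $A = \hcomp{\bar{X}}$, $B = \vcomp{\bar{X}}$, $C = \hcomp{\bar{Y}}$, and $D = \vcomp{\bar{Y}}$ all lie in $T_p M$; this is exactly the decomposition guaranteed by the splitting $T_{\bar{P}}TM = \mathcal{H}_{\bar{P}} \oplus \mathcal{V}_{\bar{P}}$. Writing $\Delta := m_1 m_3 - m_2^2$, the defining relations for $\bar{f}$ then say that $\bar{f}\circ\bar{X} = \hlift{\tilde{A}} + \vlift{\tilde{B}}$ with $\tilde{A} = (m_3 A - m_2 B)/\Delta$ and $\tilde{B} = (-m_2 A + m_1 B)/\Delta$, so that the problem is now phrased entirely in terms of lifts.

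Next I would expand $\bmetric[(p,u)]{\bar{f}\circ\bar{X}}{\bar{Y}}$ using the bilinear form \cref{eq:NonNaturalMetric} together with its symmetry, giving
\[
  \bmetric[(p,u)]{\bar{f}\circ\bar{X}}{\bar{Y}} = m_1 \metric[p]{\tilde{A}}{C} + m_2 \metric[p]{\tilde{A}}{D} + m_2 \metric[p]{\tilde{B}}{C} + m_3 \metric[p]{\tilde{B}}{D}.
\]
Substituting the expressions for $\tilde{A}$ and $\tilde{B}$ and collecting the coefficients of the four bilinear monomials $\metric[p]{A}{C}$, $\metric[p]{A}{D}$, $\metric[p]{B}{C}$, $\metric[p]{B}{D}$, the two off-diagonal coefficients (those of $\metric[p]{A}{D}$ and $\metric[p]{B}{C}$) cancel to zero, while the two diagonal coefficients each collapse to $\Delta/\Delta = 1$. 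This yields $\bmetric[(p,u)]{\bar{f}\circ\bar{X}}{\bar{Y}} = \metric[p]{A}{C} + \metric[p]{B}{D}$, which is precisely the claimed right-hand side $\metric[p]{\hcomp{\bar{X}}}{\hcomp{\bar{Y}}} + \metric[p]{\vcomp{\bar{X}}}{\vcomp{\bar{Y}}}$.

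Finally I would get the remaining equality $\bmetric[(p,u)]{\bar{f}\circ\bar{X}}{\bar{Y}} = \bmetric[(p,u)]{\bar{X}}{\bar{f}\circ\bar{Y}}$ essentially for free: the reduced expression $\metric[p]{A}{C} + \metric[p]{B}{D}$ is manifestly symmetric under the interchange $(A,B) \leftrightarrow (C,D)$, so rerunning the same expansion with $\bar{f}$ applied to $\bar{Y}$ instead of $\bar{X}$ returns the identical value. I do not expect a genuine obstacle here; the only thing to watch is the bookkeeping when collecting coefficients. The one conceptual point worth flagging is that, in horizontal/vertical components, $\bar{f}$ is exactly the inverse of the Gram matrix $\begin{bmatrix} m_1 & m_2 \\ m_2 & m_3 \end{bmatrix}$ of $\bar{g}$, and the lemma is the coordinate-free statement that pre-composing with this inverse turns the $\bar{g}$-pairing into the Sasaki-type pairing of the horizontal and vertical parts.
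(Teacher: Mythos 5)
Your proof is correct and takes the same route the paper does: the paper's own proof is just the one-line remark that the claim ``follows directly from the definitions of $\bar{f}$ and $\bar{g}$,'' and your bilinear expansion with the verified coefficient cancellations (off-diagonal terms vanishing, diagonal terms collapsing to $\Delta/\Delta = 1$) is precisely that omitted computation carried out in full, with the symmetry statement obtained exactly as one would expect.
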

\begin{proof}
  The claim follows directly from the definitions of $\bar{f}$ and $\bar{g}$.
\end{proof}
\cref{lem:idMetricTM} is important in that if there is an expression for $\bar{g}$
with one known tangent vector, the horizontal and vertical components of the
unknown tangent vector can be extracted through the metric instead of
the $d\pi$ and $K$ mappings.
\begin{remark}
The results of \cref{lem:idMetricTM} can be better understood in
local coordinates using matrix operations. To illustrate the
point, we assume $g$ to be the natural Euclidean inner product on $M$, then
\begin{equation}
  \label{eq:NonNaturalMetricMat}
  \begin{split}
    \bmetric[\left(p,u\right)]{\bar{X}}{\bar{Y}}={ }&
    \begin{bmatrix}
      \hcomp{\bar{X}}\\
      \vcomp{\bar{X}}
    \end{bmatrix}^{T}
    \begin{bmatrix}
      m_{1}\mathcal{I}_{n} & m_{2}\mathcal{I}_{n}\\
      m_{2}\mathcal{I}_{n} & m_{3}\mathcal{I}_{n}
    \end{bmatrix}
    \begin{bmatrix}
      \hcomp{\bar{Y}}\\
      \vcomp{\bar{Y}}
    \end{bmatrix}
    \Biggr|_{p}\\
    ={ }&
    \begin{bmatrix}
      \hcomp{\bar{X}}\\
      \vcomp{\bar{X}}
    \end{bmatrix}^{T}
    \mathcal{M}
    \begin{bmatrix}
      \hcomp{\bar{Y}}\\
      \vcomp{\bar{Y}}
    \end{bmatrix}
    \Biggr|_{p}
  \end{split}
\end{equation}
where $\mathcal{I}_{n}$ is the $n \times n$ identity matrix, $\bar{X}, \bar{Y} \in T_{\left(p,u\right)}TM$,
$\left(p,u\right) \in TM$, and $m_{1},m_{2},m_{3}$ $\in \mathbb{R}$ such that
$m_{1},m_{3} > 0$ and $m_{1}m_{3}-m_{2}^{2} > 0$. Since $\mathcal{M}$ is positive definite,
its inverse $\mathcal{M}^{-1}$ exists.
Thus, the function $\bar{f}$ can be interpreted (in matrix notation) as 
\begin{equation}
  \bar{f}\circ\bar{X} = \mathcal{M}^{-1}
  \begin{bmatrix}
    \hcomp{\bar{X}}\\
    \vcomp{\bar{X}}
  \end{bmatrix}.
\end{equation}
When $\bar{f}$ acts on a tangent vector in \cref{eq:NonNaturalMetricMat}, we 
recover the identity matrix and the simple pairing of the horizontal and 
vertical components.
\end{remark}

The following theorem combines \cref{lem:KozulFormula} and \cref{lem:idMetricTM}
to extract the explicit form of the horizontal and vertical components of
the Levi-Civita connection $\bar\nabla_{\bar{X}}\bar{Y}$ on $TM$ for any
vector fields $\bar{X}, \bar{Y} \in \mathfrak{X}(TM)$.

\begin{theorem}
  \label{thm:LeviCivitaConn}
  Let (M, g) be a Riemannian manifold and $\bar{\nabla}$ be the
  Levi-Civita connection on the tangent bundle (TM, $\bar{g}$)
  equipped with the metric \cref{eq:NonNaturalMetric}.  Then
\end{theorem}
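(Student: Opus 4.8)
The plan is to compute the horizontal and vertical parts of $\bar\nabla_{\bar X}\bar Y$ directly, writing $\bar\nabla_{\bar X}\bar Y = \hlift A + \vlift B$ with $A = \hcomp{\bar\nabla_{\bar X}\bar Y}$ and $B = \vcomp{\bar\nabla_{\bar X}\bar Y}$ in $T_pM$, and then producing closed forms for $A$ and $B$. Because $\bar\nabla$ is $C^\infty(TM)$-linear in the first argument and additive in the second, and because every lift-decomposable field is a sum $\hlift X + \vlift{X'}$ of lifts of vector fields on $M$, it suffices to determine $A$ and $B$ in the four cases $(\hlift X,\hlift Y)$, $(\hlift X,\vlift Y)$, $(\vlift X,\hlift Y)$, $(\vlift X,\vlift Y)$; the general formula then follows by expanding bilinearly. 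For each case \cref{lem:KozulFormula} already supplies the two scalar pairings $\bmetric{\bar\nabla_{\bar X}\bar Y}{\hlift Z}$ and $\bmetric{\bar\nabla_{\bar X}\bar Y}{\vlift Z}$, with $Z$ isolated in the second slot, so the only remaining task is to invert these pairings into the components $A$ and $B$.

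To perform that inversion I would use \cref{lem:idMetricTM}. Applying $\bar f$ to a horizontal lift gives $\bar f\circ\hlift Z = \frac{m_3}{m_1 m_3 - m_2^2}\hlift Z - \frac{m_2}{m_1 m_3 - m_2^2}\vlift Z$ and to a vertical lift gives $\bar f\circ\vlift Z = \frac{m_1}{m_1 m_3 - m_2^2}\vlift Z - \frac{m_2}{m_1 m_3 - m_2^2}\hlift Z$. Writing $\bar W = \bar\nabla_{\bar X}\bar Y$ and combining the two special cases $\bmetric{\bar f\circ\bar W}{\hlift Z} = \metric{A}{Z}$ and $\bmetric{\bar f\circ\bar W}{\vlift Z} = \metric{B}{Z}$ of the lemma's identity with the symmetry $\bmetric{\bar f\circ\bar W}{\bar Z} = \bmetric{\bar W}{\bar f\circ\bar Z}$, I obtain
\begin{equation*}
  \metric{A}{Z} = \frac{m_3\bmetric{\bar W}{\hlift Z} - m_2\bmetric{\bar W}{\vlift Z}}{m_1 m_3 - m_2^2}, \quad
  \metric{B}{Z} = \frac{m_1\bmetric{\bar W}{\vlift Z} - m_2\bmetric{\bar W}{\hlift Z}}{m_1 m_3 - m_2^2}.
\end{equation*}
This is precisely the $\mathcal{M}^{-1}$ transformation described in the remark following \cref{lem:idMetricTM}, and since $Z$ ranges over all of $T_pM$ and $g$ is nondegenerate, these identities determine $A$ and $B$ uniquely.

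Substituting the right-hand sides from \cref{lem:KozulFormula} into the two formulas above and simplifying yields the explicit components in each case. For example, the horizontal--horizontal case returns $A = \nabla_X Y + \frac{m_2 m_3}{m_1 m_3 - m_2^2}\bigl(\mathfrak{R}(u,X)Y + \tfrac12\mathfrak{R}(X,Y)u\bigr)$ together with a purely curvature-valued $B$, which collapses to the classical Sasaki expression $\hlift{(\nabla_X Y)} - \tfrac12\vlift{(\mathfrak{R}(X,Y)u)}$ when $m_2 = 0$ and $m_1 = m_3 = 1$. I expect the principal difficulty to be organizational rather than conceptual: carrying the four cases through the $2\times 2$ solve and collecting the curvature contributions into a tidy closed form. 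The more delicate curvature manipulations—rewriting pairings such as $\metric{\mathfrak{R}(Y,Z)u}{X}$ into the $\metric{\mathfrak{R}(u,\cdot)\cdot}{Z}$ shape via antisymmetry, pair symmetry, and the first Bianchi identity—have already been carried out inside the proof of \cref{lem:KozulFormula}, so only light use of the symmetries of $\mathfrak{R}$ remains and the residual work is bookkeeping.
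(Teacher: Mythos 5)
Your proposal is correct and takes essentially the same route as the paper's own proof: the paper likewise pairs $\bar{\nabla}_{\bar{X}}\bar{Y}$ against $\bar{f}\circ\hlift{Z}$ and $\bar{f}\circ\vlift{Z}$ via \cref{lem:idMetricTM} to read the components $\hcomp{\bar{\nabla}_{\bar{X}}\bar{Y}}$ and $\vcomp{\bar{\nabla}_{\bar{X}}\bar{Y}}$ off the scalar pairings supplied by \cref{lem:KozulFormula}, exploiting nondegeneracy of $g$ as $Z$ varies. Your explicit $2\times 2$ inversion, case-by-case substitution, and Sasaki sanity check merely spell out bookkeeping that the paper's two-line proof leaves implicit, and your worked horizontal--horizontal case agrees with items (i) and (ii) of the theorem.
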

\begin{enumerate}[label=(\roman*)]
\item
  $\hcomp{\bar{\nabla}_{\hlift{X}}\hlift{Y}} = \nabla_{X}Y +
  \frac{1}{m_{1}m_{3}-m_{2}^{2}}\Big(m_{2}m_{3}\mathfrak{R}(u,X)Y +
  \frac{m_{2}m_{3}}{2}\mathfrak{R}(X,Y)u \Big)$
\item
  $\vcomp{\bar{\nabla}_{\hlift{X}}\hlift{Y}} =
  \frac{1}{m_{1}m_{3}-m_{2}^{2}}\Big( -m_{2}^{2}\mathfrak{R}(u,X)Y
  -\frac{m_{1}m_{3}}{2}\mathfrak{R}(X,Y)u \Big)$
\item
  $\hcomp{\bar{\nabla}_{\hlift{X}}\vlift{Y}} =
  \frac{1}{m_{1}m_{3}-m_{2}^{2}}\Big( \frac{m_{3}^{2}}{2}\mathfrak{R}(u,Y)X
  \Big)$
\item
  $\vcomp{\bar{\nabla}_{\hlift{X}}\vlift{Y}} = \nabla_{X}Y -
  \frac{1}{m_{1}m_{3}-m_{2}^{2}}\Big(\frac{m_{2}m_{3}}{2}\mathfrak{R}(u,Y)X
  \Big)$
\item
  $\hcomp{\bar{\nabla}_{\vlift{X}}\hlift{Y}} =
  \frac{1}{m_{1}m_{3}-m_{2}^{2}}\Big( \frac{m_{3}^{2}}{2}\mathfrak{R}(u,X)Y
  \Big)$
\item
  $\vcomp{\bar{\nabla}_{\vlift{X}}\hlift{Y}} =
  -\frac{1}{m_{1}m_{3}-m_{2}^{2}}\Big( \frac{m_{2}m_{3}}{2}\mathfrak{R}(u,X)Y
  \Big)$
\item $\hcomp{\bar{\nabla}_{\vlift{X}}\vlift{Y}} = 0$
\item $\vcomp{\bar{\nabla}_{\vlift{X}}\vlift{Y}} = 0$
\end{enumerate}
for all vector fields $X,Y \in \mathfrak{X}\left(M\right)$ and $\left(p,u\right) \in TM$.\\

\begin{proof}
  \cref{lem:KozulFormula} provides an expression for 
  $\bmetric[\left(p,u\right)]{\bar{\nabla}_{\bar{X}}\bar{Y}}{\cdot}$ for any vector fields
  $\bar{X}, \bar{Y} \in \mathfrak{X}\left(TM\right)$ at a point $\left(p,u\right) \in TM$
  where the second argument of $\bar{g}$ can be chosen arbitrarily. Thus,
  we chose a purely horizontal and vertical field to extract the components of the
  connection. For any arbitrary vector field $Z \in \mathfrak{X}(M)$ and
  $\bar{f}$ defined in \cref{lem:idMetricTM}
  \begin{align*}
    \bmetric[\left(p,u\right)]{\bar{\nabla}_{\bar{X}}\bar{Y}}{\bar{f}\circ\hlift{Z}}
    &= \metric[p]{\hcomp{\bar{\nabla}_{\bar{X}}\bar{Y}}}{Z}\\
    \bmetric[\left(p,u\right)]{\bar{\nabla}_{\bar{X}}\bar{Y}}{\bar{f}\circ\vlift{Z}}
    &=\metric[p]{\vcomp{\bar{\nabla}_{\bar{X}}\bar{Y}}}{Z}.\\
  \end{align*}
\end{proof}

The results of this section allows us to compute the Levi-Civita connection
on the tangent bundle for any vector fields $\bar{X}, \bar{Y} \in \mathfrak{X}(TM)$
that are lift decomposable. However, lift-decomposable vector fields
do not span the space of all possible smooth vector fields. In general,
vector fields on the tangent bundle may change along both horizontal
and vertical curves. In the next section, we show how to extend
the results for lift-decomposable vector fields to any general
vector field.

\section{Levi-Civita Connection for General Vector Fields}
\label{sec:main2}
In this section, we extend the Levi-Civita connection in \cref{sec:main1} to general
vector fields on the tangent bundle that may change along
both horizontal and vertical curves. As discussed in \cref{sec:background}
and \cref{sec:main1}, the Levi-Civita connection in \cref{thm:LeviCivitaConn}
is only valid for lift-decomposable vector fields. In general, vector fields
$\bar{Y} \in \mathfrak{X}\left(TM\right)$ at a point $(p,u) \in TM$ depend on both
horizontal and vertical motions and may be expressed as
\begin{equation}
  \label{eq:GenVF}
  \bar{Y}_{(p,u)} = \hlift{A}_{(p,u)} + \vlift{B}_{(p,u)} + \bar{C}_{(p,u)} + \bar{D}_{(p,u)}
\end{equation}
where $A,B \in T_{p}M$, $\bar{C} \in \mathcal{H}_{(p,u)}$, $\bar{D} \in \mathcal{V}_{(p,u)}$, 
and $\bar{C} = \bar{D} = 0$ at $\left(p,u\right)$ and along
horizontal curves passing through $(p,u)$. To be more precise,
$\bar{C}$ and $\bar{D}$ are the point-wise horizontal and vertical
projections of the field $\bar{Y}_{(p',u')}-\hlift{A}_{(p,u)}-\vlift{B}_{(p,u)}$
for any point $(p',u') \in TM$ in the neighborhood around $(p,u)$. It is important
to note that $A$ and $B$ change along horizontal curves, and $\bar{C}$ and $\bar{D}$
change along vertical curves.

The standard results and our results in \cref{sec:main1}
already considered how vector fields change along horizontal
curves to derive the connection in \cref{thm:LeviCivitaConn}. In that
formulation, we ignored the motion along the vertical curves because the vector fields
are lift decomposable and thus constant along those curves. Now, we must also consider
changes along vertical curves to obtain the Levi-Civita connection for general vector fields
on the tangent bundle.
\begin{corollary}
  \label{cor:TotalDerTM}
  The Levi-Civita connection $\bar{\nabla}_{\bar{X}}\bar{Y}$ on the tangent bundle $TM$ for any
  general vector fields $\bar{X}, \bar{Y} \in \mathfrak{X}\left(TM\right)$ at
  a point $(p,u) \in TM$ is given by
  \begin{equation}
    \bar{\nabla}_{\bar{X}} \bar{Y} =
    \bar{\nabla}_{(\hlift{F}+\vlift{G})}\left(\hlift{A} + \vlift{B}\right)
    + \tilde{\nabla}_{\vlift{G}}\left(\bar{C}+\bar{D}\right)\\
  \end{equation}
  where $\bar{Y}$ is decomposed into the components defined in \cref{eq:GenVF}
  and $\bar{X} = \hlift{F} + \vlift{G}$ for $F,G \in T_{p}M$.
  The first term is the connection from \cref{thm:LeviCivitaConn} which captures
  changes along horizontal curves. The second term captures changes along
  vertical curves and does not depend on $\hlift{F}$ since $\bar{C}, \bar{D}$ are
  zero along any horizontal curve. The connection $\tilde{\nabla}$ is the usual
  connection on the flat tangent space corresponding to the choice of local coordinates.
\end{corollary}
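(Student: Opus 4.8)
The plan is to exploit two structural properties of any Levi-Civita connection: additivity in the differentiated (second) slot, and $C^{\infty}(TM)$-linearity (tensoriality) in the direction (first) slot. First I would apply the decomposition \cref{eq:GenVF} together with additivity in the second slot to write
\[
  \bar{\nabla}_{\bar{X}}\bar{Y} = \bar{\nabla}_{\bar{X}}\bigl(\hlift{A} + \vlift{B}\bigr) + \bar{\nabla}_{\bar{X}}\bigl(\bar{C} + \bar{D}\bigr).
\]
The field $\hlift{A} + \vlift{B}$ is lift decomposable by construction, so the first term is computed directly by \cref{thm:LeviCivitaConn} with $\bar{X} = \hlift{F} + \vlift{G}$; this produces the first summand $\bar{\nabla}_{(\hlift{F}+\vlift{G})}(\hlift{A}+\vlift{B})$ of the claim. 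It then remains to show that the residual term reduces to $\tilde{\nabla}_{\vlift{G}}(\bar{C} + \bar{D})$, with no dependence on $\hlift{F}$.

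Next I would split the direction using additivity in the first slot, $\bar{\nabla}_{\bar{X}}(\bar{C} + \bar{D}) = \bar{\nabla}_{\hlift{F}}(\bar{C} + \bar{D}) + \bar{\nabla}_{\vlift{G}}(\bar{C} + \bar{D})$, and argue that the horizontal piece vanishes. Since a covariant derivative is local in its direction argument, $\bar{\nabla}_{\hlift{F}}(\bar{C} + \bar{D})$ at $(p,u)$ depends on $\bar{C} + \bar{D}$ only along the integral curve of $\hlift{F}$, which is a horizontal curve through $(p,u)$. By the defining property in \cref{eq:GenVF}, $\bar{C}$ and $\bar{D}$ vanish identically along every horizontal curve through $(p,u)$. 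Writing the covariant derivative in any local frame $\{E_{\alpha}\}$ as a sum of a directional-derivative term and a Christoffel term, the former is zero because the component functions are constantly zero along the curve, and the latter is zero because those components vanish at $(p,u)$. Hence $\bar{\nabla}_{\hlift{F}}(\bar{C} + \bar{D}) = 0$, which explains the absence of $\hlift{F}$ in the statement.

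The heart of the argument is the vertical piece $\bar{\nabla}_{\vlift{G}}(\bar{C} + \bar{D})$, where I must show the Levi-Civita connection collapses to the flat fiber connection $\tilde{\nabla}$. I would fix a basis $\{e_{i}\}$ of $T_{p}M$, held constant along the fiber, and expand $\bar{C} = \sum_{i} c^{i}\,\hlift{e_{i}}$ and $\bar{D} = \sum_{i} d^{i}\,\vlift{e_{i}}$ with $c^{i}(u) = d^{i}(u) = 0$. Applying the Leibniz rule,
\[
  \bar{\nabla}_{\vlift{G}}\bar{C} = \sum_{i}\bigl(\vlift{G}\,c^{i}\bigr)\hlift{e_{i}} + \sum_{i} c^{i}\,\bar{\nabla}_{\vlift{G}}\hlift{e_{i}}, \qquad \bar{\nabla}_{\vlift{G}}\bar{D} = \sum_{i}\bigl(\vlift{G}\,d^{i}\bigr)\vlift{e_{i}} + \sum_{i} d^{i}\,\bar{\nabla}_{\vlift{G}}\vlift{e_{i}}.
\]
By items (vii)--(viii) of \cref{thm:LeviCivitaConn} the vertical frame is parallel along the fiber, $\bar{\nabla}_{\vlift{G}}\vlift{e_{i}} = 0$, so the $\bar{D}$-contribution is purely the flat directional derivative. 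The horizontal frame is \emph{not} parallel---items (v)--(vi) express $\bar{\nabla}_{\vlift{G}}\hlift{e_{i}}$ through $\mathfrak{R}(u,G)e_{i}$---but those curvature corrections are weighted by $c^{i}(u) = 0$ and therefore drop out at $(p,u)$. What survives is exactly $\sum_{i}(\vlift{G}\,c^{i})\hlift{e_{i}} + \sum_{i}(\vlift{G}\,d^{i})\vlift{e_{i}}$, the derivative of the components in the fiber-constant frame, which is $\tilde{\nabla}_{\vlift{G}}(\bar{C} + \bar{D})$.

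The main obstacle I anticipate is precisely this last reduction: a priori the vertical derivative of a horizontal-valued field feels curvature, and one must record carefully why it disappears. The resolution is that curvature enters only through the non-parallel frame $\hlift{e_{i}}$ and is multiplied by the residual's value at the base point, which is zero by the very definition of $\bar{C}$ in \cref{eq:GenVF}; the genuinely curvature-carrying behaviour of horizontal lifts has already been absorbed into the first, lift-decomposable summand via \cref{thm:LeviCivitaConn}. A secondary point to state precisely is the identification of $\tilde{\nabla}$ with the Levi-Civita connection of the flat fiber $T_{p}M$, which holds because the restricted metric $\bmetric{\vlift{X}}{\vlift{Y}} = m_{3}\metric{X}{Y}$ has constant coefficients in the fiber-constant frame. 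Combining the three pieces yields $\bar{\nabla}_{\bar{X}}\bar{Y} = \bar{\nabla}_{(\hlift{F}+\vlift{G})}(\hlift{A}+\vlift{B}) + \tilde{\nabla}_{\vlift{G}}(\bar{C} + \bar{D})$, as claimed.
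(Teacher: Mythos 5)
Your proof is correct and takes essentially the same route as the paper's own (very terse) proof: decompose $\bar{Y}$ via \cref{eq:GenVF}, use bilinearity of $\bar{\nabla}$, observe that $\bar{\nabla}_{\hlift{F}}\left(\bar{C}+\bar{D}\right)=0$ because $\bar{C},\bar{D}$ vanish along horizontal curves, and handle the lift-decomposable part with \cref{thm:LeviCivitaConn}. You additionally make explicit a step the paper leaves implicit, namely the frame computation showing that in $\bar{\nabla}_{\vlift{G}}\left(\bar{C}+\bar{D}\right)$ the curvature contributions from items (v)--(viii) of \cref{thm:LeviCivitaConn} are weighted by component functions vanishing at $(p,u)$, so the Levi-Civita derivative collapses to the flat $\tilde{\nabla}$.
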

\begin{proof}
  The proof follows from the vector field decomposition in \cref{eq:GenVF} and
  the properties of the Levi-Civita connection. Note that since $\bar{C}, \bar{D} = 0$
  along horizontal curves, the connection
  $\tilde{\nabla}_{\hlift{F}}\left(\bar{C}+\bar{D}\right) = 0$.
\end{proof}

\section{Examples}
In this section, we present two applications of our results.
In the first, we show that the Sasaki metric and the corresponding Levi-Civita
connection on $TM$ is a special case. In the second, we apply the results on
SO(3) and derive the Levi-Civita connection on $T$SO(3).
\label{sec:experiments}
\subsection{Sasaki Metric}
In this example, we show that the Saski Metric \cite{Sasaki1958} and the induced Levi-Civita
connection on $TM$ is a special case of our results. If we choose
\begin{equation*}
  m_{1} = 1, \quad m_{2} = 0, \quad m_{3} = 1
\end{equation*}
then the metric \cref{eq:NonNaturalMetric} becomes
\begin{equation*}
  \begin{split}
    &\bmetric{\hlift{X}}{\hlift{Y}} = \metric{X}{Y}\\
    &\bmetric{\hlift{Y}}{\vlift{Y}} = 0\\
    &\metric{\vlift{X}}{\vlift{Y}} = \metric{X}{Y}.
  \end{split}
\end{equation*}
The induced connection $\bar{\nabla}$ on $TM$, given by \cref{thm:LeviCivitaConn},
can be shown to be equivalent to the results obtained by Kowalski in \cite{Kowalski1971}.
\subsection{SO(3) Example}
In this example, we consider the Special Orthogonal Group $SO(3)$ equipped with a metric $g$
and its tangent bundle $TSO(3)$ equipped with the metric in \cref{eq:NonNaturalMetric}.
The Levi-Civita connection on $SO(3)$ is given by Edelman 
in \cite{Edelman1998}
\begin{equation}
  \nabla_{X}Y = \dot{Y} + \frac{1}{2}R\left(X^{T}Y + Y^{T}X\right)
\end{equation}
for all vector fields $X,Y \in \mathfrak{X}\left(SO(3)\right)$ at a point 
$R \in SO(3)$ and $\dot{Y}$ is the usual time derivative. 
Given left-invariant vector fields
$\bar{X}, \bar{Y} \in \mathfrak{X}\left(TSO(3)\right)$ along a curve
$\bar{\gamma}$ such that
\begin{equation}
  \bar{X} = \left(R\hat{\zeta}, R\hat{\eta} \right),
  \quad
  \bar{Y} = \left( R\hat{\alpha}, R\hat{\beta} \right),
  \quad
  \bar{\gamma} = \left(R, R\hat{\omega}\right)
\end{equation}
where constants $\zeta, \eta, \alpha, \beta, \omega \in \mathbb{R}^{3}$,
$\hat{(\cdot)}:\mathbb{R}^{3} \rightarrow so(3)$ is the hat operator which
map real numbers to the Lie algebra, and $TT_{R}SO(3) \rightarrow
T_{R}SO(3)$. Then the induced Levi-Civita
connection $\bar{\nabla}$ on $TSO(3)$, in local coordinates, is
given by
\begin{enumerate}[label=(\roman*)]
\item
  \begin{equation*}
    \begin{split}
      \hcomp{\bar{\nabla}_{\bar{X}}\bar{Y}} ={ }&R\left(\hat{\zeta}\hat{\alpha} +
        \frac{1}{2}\left(\hat{\zeta}^{T}\hat{\alpha}
          +\hat{\alpha}^{T}\hat{\zeta}\right)\right)\\
        &\quad -R\frac{m_{2}m_{3}}{8(m_{1}m_{3}-m_{2}^{2})}\left(
          2\liebracket{\liebracket{\hat{\omega}}{\hat{\zeta}}}{\hat{\alpha}}
          +\liebracket{\liebracket{\hat{\zeta}}{\hat{\alpha}}}{\hat{\omega}}
        \right)\\
        &\quad -R\frac{m_{3}^{2}}{8(m_{1}m_{3}-m_{2}^{2})}
        \liebracket{\liebracket{\hat{\omega}}{\hat{\beta}}}{\hat{\zeta}}\\
        &\quad -R\frac{m_{3}^{2}}{8(m_{1}m_{3}-m_{2}^{2})}
        \liebracket{\liebracket{\hat{\omega}}{\hat{\eta}}}{\hat{\alpha}}
    \end{split}
  \end{equation*}
\item
  \begin{equation*}
    \begin{split}
      \vcomp{\bar{\nabla}_{\bar{X}}\bar{Y}} ={ }&R\left(\hat{\zeta}\hat{\beta} +
        \frac{1}{2}\left(\hat{\zeta}^{T}\hat{\beta}
          +\hat{\beta}^{T}\hat{\zeta}\right)\right)\\
      &\quad +R\frac{m_{2}m_{3}}{8(m_{1}m_{3}-m_{2}^{2})}
        \liebracket{\liebracket{\hat{\omega}}{\hat{\beta}}}{\hat{\zeta}}\\
      &\quad  +R\frac{1}{8(m_{1}m_{3}-m_{2}^{2})}\left(
        2m_{2}^{2}\liebracket{\liebracket{\hat{\omega}}{\hat{\zeta}}}{\hat{\alpha}}
        +m_{1}m_{3}\liebracket{\liebracket{\hat{\zeta}}{\hat{\alpha}}}{\hat{\omega}}
      \right)\\
      &\quad +R\frac{m_{2}m_{3}}{8(m_{1}m_{3}-m_{2}^{2})}
    \liebracket{\liebracket{\hat{\omega}}{\hat{\eta}}}{\hat{\alpha}}.
    \end{split}
  \end{equation*}
\end{enumerate}

In the general case where $\omega = \omega(t)$, $\alpha = \alpha(\omega)$,
$\beta = \beta(\omega)$, an additional term is required to account for
changes in the vertical subspace along the curve $\bar{\gamma}$
(see \cref{cor:TotalDerTM}). The connection on $TSO(3)$ for
this vector field is given by
\begin{enumerate}[label=(\roman*)]
\item
  \begin{equation*}
    \hcomp{\bar{\nabla}_{\bar{X}}\bar{Y}} = ... + R\left(
      \frac{\partial\alpha}{\partial\omega}\dot{\omega}\right)\hat{}
    \text{  }\hat{\eta}
  \end{equation*}
\item
  \begin{equation*}
    \vcomp{\bar{\nabla}_{\bar{X}}\bar{Y}} = ... + R\left(
      \frac{\partial\beta}{\partial\omega}\dot{\omega}\right)\hat{}
    \text{  }\hat{\eta}.
  \end{equation*}
\end{enumerate}
where $\tilde{\nabla}$ is the usual directional derivative on $\mathbb{R}^{3}$.
Both results can be validated by the metric compatibility requirement
along their respective curves.

\section{Conclusions}
\label{sec:conclusions}
In this paper, we study the relationship between Riemannian manifolds
and their tangent bundle. Namely, we see that a manifold equipped with a 
metric and Levi-Civita connection induces a metric and Levi-Civita
connection on its tangent bundle by the natural decomposition
of the tangent bundle into the horizontal
and vertical subspaces. We then defined a non-natural metric
on the tangent bundle and derived the corresponding Levi-Civita
connection. In addition, we showed explicitly how to
extend the results to vector fields that are not constant along
the fibers. As a validation of our results, we see that
under special conditions the non-natural metric reduces
to the Sasaki metric and the corresponding Levi-Civita
connection agrees with the results of Kowalski.




\bibliographystyle{siamplain} \bibliography{references}
\end{document}